\newtheorem{theorem}{Theorem}
\newtheorem{acknowledgement}[theorem]{Acknowledgement}
\newtheorem{corollary}[theorem]{Corollary}
\newtheorem{lemma}[theorem]{Lemma}
\newtheorem{proposition}[theorem]{Proposition}
\newenvironment{proof}[1][Proof]{\noindent\textbf{#1.} }{\ \rule{0.5em}{0.5em}}
\begin{document}

\title{\textbf{Nil-clean matrix rings}}
\author{S. Breaz\thanks{%
S. Breaz is supported by the CNCS-UEFISCDI grant PN-II-RU-TE-2011-3-0065.},
G. C\u{a}lug\u{a}reanu, P. Danchev and T. Micu \thanks{%
Key Words and Phrases: nil clean ring, Frobenius normal form, companion
matrix, Abelian 2-group. 2010 AMS Classification: 15 B 33, 16 E 50, 20 K 30}}
\maketitle

\begin{abstract}
We characterize the nil clean matrix rings over fields. As a by product, we
obtain a complete characterization of the finite rank Abelian groups with
nil clean endomorphism ring and the Abelian groups with strongly nil clean
endomorphism ring, respectively.
\end{abstract}

\section{Introduction}

An element in a ring $R$ is said to be \textsl{(strongly) nil-clean} if it
is the sum of an idempotent and a nilpotent (and these commute). A ring $R$
is called (strongly) nil-clean if all its elements are (strongly) nil-clean.
As customarily (for modules), an Abelian group will be called \textsl{%
(strongly) nil-clean} if it has (strongly) nil-clean endomorphism ring. All
the groups we consider are Abelian (therefore, in the sequel "group" means
Abelian group). It is easy to see that every strongly nil-clean element is
strongly clean and that every nil-clean ring is clean.

For a comprehensive study of (strongly) nil-clean rings, we refer to \cite%
{die} or, more recently, to \cite{die-ja}. For a study of strongly clean
matrices over (commutative) projective-free rings, we also mention \cite{che}%
.

The main result which we establish in the present paper is the complete
characterization of nil clean matrix rings over fields (Theorem \ref{theorem}%
). Incidentally (note that this study began far before the apparition of 
\cite{die-ja}, in an attempt to characterize Abelian groups which have
(strongly) nil clean endomorphism ring), this achievement provides a partial
affirmative answer to a question asked in \cite[Question 3]{die-ja}: \ "Let $%
R$ be a nil clean ring, and let $n$ be a positive integer. Is $\mathcal{M}%
_{n}(R)$ nil clean ?". As an application, we characterize the finite rank
Abelian groups which have nil clean endomorphism rings and the Abelian
groups which have strongly nil clean endomorphism rings.

For a background material concerning matrix theory that will be used in the
sequel without a concrete citation, we refer to \cite{S}.

\section{Nil-clean matrix rings}

First we recall from \cite[Proposition 3.14]{die-ja} the following simple
but important result.

\begin{lemma}
\label{char2} Let $R$ be a nil clean ring. Then the element $2$ is a
(central) nilpotent and, as such, is always contained in $J(R)$.
\end{lemma}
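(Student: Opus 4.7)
The plan is to apply the nil-clean hypothesis to the specific element $2 \in R$ and then exploit the automatic centrality of $2$.

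First I would write $2 = e + q$ with $e^2 = e$ and $q^k = 0$ for some $k \geq 1$. Substituting $e = 2 - q$ into the idempotent equation $e^2 = e$ and expanding yields the scalar-polynomial identity $q^2 - 3q + 2 = 0$, which I rearrange as $2 = q(3-q)$.

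The crucial observation is that $q$ and $3-q$ are both polynomials in the single element $q$ and therefore commute, even though $e$ and $q$ themselves need not commute (this is exactly where one would otherwise want the strongly nil-clean hypothesis). Commutativity lets me take $k$th powers of $2 = q(3-q)$ to obtain $2^{k} = q^{k}(3-q)^{k} = 0$, so $2$ is nilpotent. Centrality of $2 = 1+1$ is automatic, and for any central nilpotent $a$ and any $r \in R$ the element $ar$ is again nilpotent since $(ar)^{n} = a^{n}r^{n} = 0$; thus $1 - ar$ is a unit for every $r$, placing $a \in J(R)$. Applying this to $a = 2$ finishes the argument.

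The statement is really a one-step computation, so there is no genuine obstacle; the only subtlety worth highlighting is that the naive attempt to conclude nilpotence of $2$ directly from $2 = e + q$ fails without a commutation hypothesis, and the point of the proof is that the idempotent relation forces a polynomial identity in $q$ alone, sidestepping this issue.
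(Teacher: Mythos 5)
Your argument is correct: from $2=e+q$ and $e=2-q$ the idempotent relation gives $q^{2}-3q+2=0$ (using that $2$ is central), hence $2=q(3-q)$, and since $q$ commutes with $3-q$ you get $2^{k}=q^{k}(3-q)^{k}=0$; the passage from ``central nilpotent'' to membership in $J(R)$ via $1-ar$ being a unit is also fine. Note, however, that the paper itself gives no proof of this lemma --- it simply recalls it from Diesl's article \cite[Proposition 3.14]{die-ja} --- and the argument there is different in flavour: one writes $-1=e+b$ with $e$ idempotent and $b$ nilpotent, observes that $e=-(1+b)$ is then an idempotent which is a unit and hence equals $1$, and concludes that $2=-b$ is nilpotent, with centrality again giving $2\in J(R)$. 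Your route, decomposing $2$ itself and extracting the polynomial identity in $q$ alone, is an equally short and self-contained alternative; its virtue is precisely the point you highlight, namely that no commutation between $e$ and $q$ is needed because everything is reduced to a relation among powers of the single element $q$, whereas the cited proof instead trades on the elementary fact that a unit idempotent is $1$. Either way the statement follows, so there is no gap.
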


Further, from the same source, we shall use \cite[Corollary 3.10]{die-ja}

\begin{lemma}
\label{unipotent}A unit is strongly nil-clean if and only if it is unipotent.
\end{lemma}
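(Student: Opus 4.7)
The plan is to prove the two directions separately; one is immediate and the other is the content of the lemma.

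For the easy direction ($\Leftarrow$), if $u$ is unipotent, then by definition $u = 1 + n$ with $n$ nilpotent. Since $1$ is an idempotent that commutes with everything, the decomposition $u = 1 + n$ exhibits $u$ as strongly nil-clean.

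For the substantive direction ($\Rightarrow$), I would assume $u = e + n$ with $e^2 = e$, $n$ nilpotent, $en = ne$, and $u$ a unit; the goal is to force $e = 1$. The key observation is that $e$ commutes with $u$ (since $eu = e^2 + en = e + ne = ue$), so $1-e$ commutes with $u$ as well. Then I would compute
\[
u(1-e) = (e+n)(1-e) = e - e^2 + n - ne = n(1-e),
\]
which is nilpotent because $n$ and $1-e$ commute and $n$ is nilpotent (explicitly $(n(1-e))^k = n^k(1-e)$ vanishes for large $k$).

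Now I exploit that $u$ and $1-e$ commute: for $k$ large,
\[
0 = \bigl(u(1-e)\bigr)^k = u^k (1-e)^k = u^k(1-e).
\]
Multiplying by the unit $u^{-k}$ yields $1-e = 0$, hence $e = 1$ and $u = 1 + n$ is unipotent.

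I do not expect a real obstacle here; the only subtlety is noticing that the idempotent $e$ automatically commutes with the unit $u$ so that we can legitimately pass $u$ through $1-e$ when iterating. Once that is in place the argument is a one-line manipulation.
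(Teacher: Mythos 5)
Your proof is correct: the commutation $eu=ue$ is exactly the point, the identity $u(1-e)=n(1-e)$ is right, and combining $(u(1-e))^k=u^k(1-e)$ with $(n(1-e))^k=n^k(1-e)=0$ and the invertibility of $u^k$ does force $e=1$. Note that the paper does not prove this lemma at all — it simply cites \cite[Corollary 3.10]{die-ja} — so your argument serves as a valid, self-contained replacement and is essentially the standard one behind that citation.
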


Here an element in a ring is called \textsl{unipotent} if it has the form $%
1+n$, for a suitable nilpotent $n$.

Our main result is the following:

\begin{theorem}
\label{theorem} Suppose that $K$ is a field. The following are equivalent:

\begin{enumerate}
\item $K\cong \mathbb{F}_{2}$;

\item For every positive integer $n$ the matrix ring $\mathcal{M}_{n}(K)$ is
nil-clean;

\item There exists a positive integer $n$ such that the matrix ring $%
\mathcal{M}_{n}(K)$ is nil-clean.
\end{enumerate}
\end{theorem}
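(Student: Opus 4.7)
I plan to prove the equivalences via the cycle $(2)\Rightarrow(3)\Rightarrow(1)\Rightarrow(2)$; $(2)\Rightarrow(3)$ is immediate. For $(3)\Rightarrow(1)$, applying Lemma~\ref{char2} to $R=\mathcal{M}_n(K)$ makes $2\cdot I_n$ nilpotent, so $\mathrm{char}(K)=2$. Then for any $\alpha\in K$ I take $A=\alpha E_{11}$ (the scalar $\alpha$ times the standard matrix unit); writing $A=E+N$ with $E$ idempotent and $N$ nilpotent and taking traces gives $\alpha=\mathrm{tr}(E)+\mathrm{tr}(N)=\mathrm{rank}(E)$ in $K$ (because $\mathrm{tr}(N)=0$). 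Since $\mathrm{rank}(E)\in\{0,1,\ldots,n\}$ maps into $\mathbb{F}_2\subseteq K$ under $\mathbb{Z}\to K$, we conclude $\alpha\in\mathbb{F}_2$, whence $K\cong\mathbb{F}_2$.

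The substantive direction is $(1)\Rightarrow(2)$. My strategy is a structural reduction to companion matrices of irreducible polynomials, based on the following block-triangular principle: if $A$ has a proper invariant subspace $W$, writing $A$ in an adapted basis as $\bigl(\begin{smallmatrix}A_1 & X\\0 & A_2\end{smallmatrix}\bigr)$, nil-clean decompositions $A_i=E_i+N_i$ extend to $A=E+N$ via $E=E_1\oplus E_2$ (block-diagonal, idempotent) and $N=\bigl(\begin{smallmatrix}N_1 & X\\0 & N_2\end{smallmatrix}\bigr)$ (block-upper-triangular with nilpotent diagonal blocks, hence nilpotent). Combined with the Frobenius normal form, this reduces the problem to showing $C(p^k)$ is nil-clean for every irreducible $p\in\mathbb{F}_2[x]$ and $k\geq1$. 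A further application of the principle to the $x$-invariant submodule $(p^{k-1})/(p^k)\subseteq\mathbb{F}_2[x]/(p^k)$ (which is $\mathbb{F}_2[x]$-isomorphic to $\mathbb{F}_2[x]/(p)$, with quotient $\mathbb{F}_2[x]/(p^{k-1})$) then reduces, by induction on $k$, to the case $k=1$: namely, to showing $C(p)$ is nil-clean for every irreducible $p$.

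The cases $p=x$ and $p=x+1$ are trivial ($C(p)$ is nilpotent or the identity). The main obstacle is $p$ irreducible of degree $d\geq 2$: here $C(p)$ is a unit that is not unipotent, so by Lemma~\ref{unipotent} it is not strongly nil-clean, and any decomposition $C(p)=E+N$ must have $E$ and $N$ non-commuting. Taking traces as in the first paragraph forces $\mathrm{rank}(E)\equiv\mathrm{tr}(C(p))=a_{d-1}\pmod 2$, where $a_{d-1}$ is the coefficient of $x^{d-1}$ in $p$, so I seek an idempotent of minimal permitted rank. For a rank-one ansatz $E=w\phi^T$ with $\phi^Tw=1$, the matrix-determinant lemma rewrites the nilpotency condition $\det(xI-(C(p)-E))=x^d$ as the polynomial identity $\phi^T\,\mathrm{adj}(xI-C(p))\,w=p(x)-x^d$; matching coefficients of $x^j$ for $0\leq j\leq d-1$ yields $d$ linear equations in $\phi$, and because $w$ is cyclic for $C(p)$ the associated coefficient-vectors form a basis of $\mathbb{F}_2^d$, giving a unique solution that satisfies $\phi^Tw=1$ precisely when $a_{d-1}=1$. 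When $a_{d-1}=0$ I would instead use a rank-two idempotent $E=u_1\phi_1^T+u_2\phi_2^T$ with biorthogonal $\{u_i\},\{\phi_i\}$ and solve the analogous identity from the rank-two matrix-determinant formula; verifying solvability of this larger nonlinear system---exploiting $p(0)\ne 0$ and $p(1)\ne 0$, which follow from irreducibility of $p$ of degree $\geq 2$ over $\mathbb{F}_2$---is where I expect the main technical work to lie.
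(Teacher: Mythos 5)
Your overall architecture is fine, and two of its three pieces are genuinely different from the paper and correct. For (3)$\Rightarrow$(1) your trace argument works: $\mathrm{tr}(N)=0$ for a nilpotent matrix and $\mathrm{tr}(E)=\mathrm{rank}(E)\cdot 1_K\in\mathbb{F}_2$ once $\mathrm{char}(K)=2$ is known from Lemma~\ref{char2}, so applying a nil-clean decomposition to $\alpha E_{11}$ forces $\alpha\in\mathbb{F}_2$; the paper instead observes that for the central unit $aI_n$ any nil-clean decomposition is automatically strongly nil-clean and invokes Lemma~\ref{unipotent}. For (1)$\Rightarrow$(2) your block-triangular principle is correct (an upper block-triangular matrix with nilpotent diagonal blocks has characteristic polynomial $x^{n}$, hence is nilpotent by Cayley--Hamilton), and the two-stage reduction to $C(p)$ with $p$ irreducible is valid. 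The rank-one step also checks out: writing $\mathrm{adj}(xI-C)=\sum_k h_k(C)x^k$, the system is $\phi^{T}h_k(C)w=c_k$, the vectors $h_k(C)w$ form a basis since $w$ is cyclic, and the unique solution satisfies $\phi^{T}w=c_{d-1}$, so one gets a rank-one idempotent exactly when $a_{d-1}=1$.

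The gap is the case $a_{d-1}=0$, and it is the heart of the theorem, not a technicality you can defer: after your reduction it still covers all irreducible polynomials of zero trace (e.g.\ $x^{3}+x+1$), and in the paper it corresponds to Cases 2 and 3, i.e.\ most of the actual work. You propose a rank-two ansatz $E=u_1\phi_1^{T}+u_2\phi_2^{T}$ and the generalized matrix-determinant identity, but you explicitly leave the solvability of the resulting nonlinear system unverified, so as written (1)$\Rightarrow$(2) is not proved. The paper closes exactly this point constructively: for an arbitrary companion matrix over $\mathbb{F}_2$ (no irreducibility or prime-power reduction needed) it exhibits explicit $E$ and $N$ in three cases according to $-c_{n-1}$ and $c_{n-2}$, with $E$ of rank at most $2$, idempotency checked by direct multiplication and nilpotency of $N$ again from Cayley--Hamilton because its characteristic polynomial is $X^{n}$. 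To complete your route you must either carry the rank-two determinant computation through to an existence proof (showing the system is always solvable when $a_{d-1}=0$, where your observations $p(0)\neq 0$, $p(1)\neq 0$ may indeed help), or simply substitute explicit matrices as the paper does; until then the proof is incomplete at its decisive step.
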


\begin{proof}
To prove the implication (1)$\Rightarrow $(2), since any matrix can be put
into Frobenius normal form (a direct sum of companion matrices), and a
matrix similar to a nilpotent (or idempotent, or nil-clean) matrix is
nilpotent (or idempotent, or nil-clean respectively), it suffices to prove
the implication for a single companion matrix. Consider the $n\times n$
companion matrix

\begin{equation*}
C_{p}=\left[ 
\begin{array}{ccccc}
0 & 0 & ... & 0 & -c_{0} \\ 
1 & 0 & ... & 0 & -c_{1} \\ 
0 & 1 & ... & 0 & -c_{2} \\ 
\vdots & \vdots & ... & \vdots & \vdots \\ 
0 & 0 & ... & 1 & -c_{n-1}%
\end{array}%
\right]
\end{equation*}

\noindent associated to the polynomial $%
p(t)=c_{0}+c_{1}t+...+c_{n-1}t^{n-1}+t^{n}$. Notice that in our case $%
-c_{i}\in \{0,1\}$. We distinguish three cases.

\emph{Case 1}: $-c_{n-1}=1$. Just take 
\begin{equation*}
N=\left[ 
\begin{array}{ccccc}
0 & 0 & ... & 0 & 0 \\ 
1 & 0 & ... & 0 & 0 \\ 
0 & 1 & ... & 0 & 0 \\ 
\vdots & \vdots & \ddots & \vdots & \vdots \\ 
0 & 0 & ... & 1 & 0%
\end{array}%
\right] \text{ and}\ E=\left[ 
\begin{array}{ccccc}
0 & 0 & ... & 0 & -c_{0} \\ 
0 & 0 & ... & 0 & -c_{1} \\ 
0 & 0 & ... & 0 & -c_{2} \\ 
\vdots & \vdots & \ddots & \vdots & \vdots \\ 
0 & 0 & ... & 0 & \mathbf{1}%
\end{array}%
\right] .
\end{equation*}

\emph{Case 2}: For $-c_{n-1}=0$ and $c_{n-2}=1$, we take 
\begin{equation*}
N=\left[ 
\begin{array}{ccccccc}
\mathbf{0} & 0 & ... & 0 & 0 & 0 & 0 \\ 
1 & \mathbf{0} & ... & 0 & 0 & 0 & 0 \\ 
0 & 1 & \ddots & 0 & 0 & 0 & 0 \\ 
\vdots & \vdots & \ddots & \ddots & \vdots & \vdots & \vdots \\ 
0 & 0 & ... & 1 & \mathbf{0} & 0 & 0 \\ 
0 & 0 & ... & 0 & 1 & \mathbf{1} & 1 \\ 
0 & 0 & ... & 0 & 0 & 1 & \mathbf{1}%
\end{array}%
\right] \text{, }E=\left[ 
\begin{array}{ccccccc}
\mathbf{0} &  & ... &  & 0 & 0 & c_{0} \\ 
0 &  & ... &  & 0 & 0 & c_{1} \\ 
0 &  & ... &  & 0 & 0 & c_{2} \\ 
\vdots &  &  & \ddots & \vdots & \vdots & \vdots \\ 
0 &  & ... &  & \mathbf{0} & 0 & c_{n-3} \\ 
0 &  & ... &  & 0 & \mathbf{1} & 0 \\ 
0 &  & ... &  & 0 & 0 & \mathbf{1}%
\end{array}%
\right] .
\end{equation*}

\emph{Case 3}: For $-c_{n-1}=0$ and $c_{n-2}=0$, we take

\begin{equation*}
N=\left[ 
\begin{array}{ccccccc}
\mathbf{0} & 0 & ... & 0 & 0 & 0 & 0 \\ 
1 & \mathbf{0} & ... & 0 & 0 & 0 & 0 \\ 
0 & 1 & \ddots & 0 & 0 & 0 & 0 \\ 
\vdots & \vdots & \ddots & \ddots & \vdots & \vdots & \vdots \\ 
0 & 0 & ... & 1 & \mathbf{0} & 1 & 1 \\ 
0 & 0 & ... & 0 & 1 & \mathbf{1} & 0 \\ 
0 & 0 & ... & 0 & 0 & 1 & 1%
\end{array}%
\right] \text{,}\ E=\left[ 
\begin{array}{ccccccc}
\mathbf{0} &  & ... &  & 0 & 0 & c_{0} \\ 
0 &  & ... &  & 0 & 0 & c_{1} \\ 
0 &  & \ddots &  & 0 & 0 & c_{2} \\ 
\vdots &  & \ddots &  & \vdots & \vdots & \vdots \\ 
0 &  & ... &  & \mathbf{0} & 1 & c_{n-3}+1 \\ 
0 &  & ... &  & 0 & \mathbf{1} & 0 \\ 
0 &  & ... &  & 0 & 0 & \mathbf{1}%
\end{array}%
\right] .
\end{equation*}

The idempotency is easily checked directly, and the nilpotency reduces to
Cayley-Hamilton theorem since all matrices $N$ have $X^{n}$ as
characteristic polynomial.

The implication (2)$\Rightarrow$(3) is obvious and so its verification will
be omitted.

As for the implication (3)$\Rightarrow $(1) notice that, as a consequence of
Lemma \ref{char2}, the field $K$ is of characteristic 2.

Furthermore, let $a\neq 0$ be an element from $K$. Then $aI_{n}=E+N$ with $E$
idempotent and $N$ nilpotent matrices. Since $aI_{n}$ is central, using
Lemma \ref{unipotent} it follows that $(a-1)I_{n}$ is a nilpotent matrix.
This is possible only if $a=1$, hence $K\cong \mathbb{F}_{2}$.
\end{proof}

Before stating some consequences and generalizations, recall

\begin{lemma}
\label{nil} (\textrm{\cite[Proposition 3.15]{die-ja}}) Let $I$ be a nil
ideal of the ring $R$. Then $R$ is nil-clean if and only if the quotient
ring $R/I$ is nil-clean.
\end{lemma}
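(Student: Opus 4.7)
The plan is to prove both directions separately, using the standard fact that idempotents can be lifted modulo any nil ideal.

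For the forward direction ($R$ nil-clean $\Rightarrow$ $R/I$ nil-clean), I would simply take an arbitrary $\bar{x}\in R/I$, lift it to some $x\in R$, write $x=e+n$ with $e$ idempotent and $n$ nilpotent, and project back to $R/I$. The image of $e$ under the quotient map remains idempotent and the image of $n$ remains nilpotent, so $\bar{x}$ is nil-clean in $R/I$. No use of the nilness of $I$ is needed here; the result holds for any two-sided ideal.

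For the harder direction ($R/I$ nil-clean $\Rightarrow$ $R$ nil-clean), I would fix $x\in R$ and look at its image $\bar{x}\in R/I$, which decomposes as $\bar{x}=\bar{e}+\bar{n}$ with $\bar{e}^{2}=\bar{e}$ and $\bar{n}^{k}=\bar{0}$ for some $k\geq 1$. The key step is lifting $\bar{e}$ to a genuine idempotent $e\in R$; this is where the nilness of $I$ is essential, and I would invoke the classical lifting lemma for idempotents modulo nil ideals. Once such $e$ is chosen, set $y:=x-e$. By construction, $y$ maps to $\bar{n}$ in $R/I$, so $y^{k}\in I$. Since $I$ is nil, $y^{k}$ is nilpotent in $R$, which forces $y$ itself to be nilpotent. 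Thus $x=e+y$ exhibits $x$ as a nil-clean element of $R$.

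The only genuine obstacle is the idempotent-lifting step, but this is a textbook fact (sometimes attributed to Krull), and the excerpt's lemma is explicitly cited from \cite[Proposition 3.15]{die-ja}, so I would feel comfortable quoting it without reproving. Everything else is essentially a bookkeeping computation using that the quotient map is a ring homomorphism preserving idempotency and nilpotency.
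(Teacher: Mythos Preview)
Your argument is correct in both directions. The forward direction is immediate, and in the converse you have identified exactly the two ingredients that matter: lifting of idempotents modulo a nil ideal, and the observation that if $y^{k}\in I$ with $I$ nil then some power of $y^{k}$ vanishes, hence $y$ itself is nilpotent.

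There is nothing to compare against in the paper: the authors do not prove this lemma but simply quote it as \cite[Proposition~3.15]{die-ja}. Your write-up is the standard proof and is almost certainly what Diesl does in the cited reference, so you may regard it as matching the intended argument.
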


Coming back to Diesl question (see Introduction), notice that, as a
consequence of Theorem \ref{theorem} and Lemma \ref{nil}, $\mathcal{M}_{n}(%
\mathbf{Z}(2^{k}))$ is another example of nil-clean matrix ring, for a nil
clean ring which is not a field. More can be proved

\begin{corollary}
If $R$ is any nil-clean commutative local ring then $\mathcal{M}_{n}(R)$ is
nil-clean.
\end{corollary}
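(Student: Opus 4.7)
The plan is to reduce to the already-established field case by applying Lemma~\ref{nil} to the matrix ring $\mathcal{M}_n(R)$ with the ideal $\mathcal{M}_n(\mathfrak{m})$, where $\mathfrak{m}=J(R)$ is the unique maximal ideal of the commutative local ring $R$. Two ingredients must be verified: that $\mathcal{M}_n(\mathfrak{m})$ is a nil ideal of $\mathcal{M}_n(R)$, and that the quotient $\mathcal{M}_n(R)/\mathcal{M}_n(\mathfrak{m})$ is nil-clean.

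First I would show that $\mathfrak{m}$ itself is a nil ideal of $R$. For $x\in\mathfrak{m}$, nil-cleanness of $R$ gives $x=e+n$ with $e$ idempotent and $n$ nilpotent; since $R$ is commutative and local, every nilpotent lies in $\mathfrak{m}$, so $e=x-n\in\mathfrak{m}$, and the only idempotents in a local ring are $0$ and $1$, forcing $e=0$. Hence $x=n$ is nilpotent.

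Next I would promote this to matrices. Given $A\in\mathcal{M}_n(\mathfrak{m})$, its finitely many entries $a_1,\dots,a_{n^2}\in\mathfrak{m}$ are nilpotent elements of a commutative ring; any ideal generated by finitely many nilpotents in a commutative ring is itself nilpotent (a sufficiently long monomial in the generators must contain a killing power of some $a_i$). If $(a_1,\dots,a_{n^2})^k=0$ then $A^k\in\mathcal{M}_n\bigl((a_1,\dots,a_{n^2})^k\bigr)=0$, so $\mathcal{M}_n(\mathfrak{m})$ is nil. The quotient is identified via the natural isomorphism $\mathcal{M}_n(R)/\mathcal{M}_n(\mathfrak{m})\cong\mathcal{M}_n(R/\mathfrak{m})$; the residue field $R/\mathfrak{m}$ inherits nil-cleanness, and any nil-clean field must be $\mathbb{F}_2$ (its elements being of the form $e+n$ with $e\in\{0,1\}$ and $n=0$). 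Theorem~\ref{theorem} then yields that $\mathcal{M}_n(\mathbb{F}_2)$ is nil-clean, and Lemma~\ref{nil} transfers this back to $\mathcal{M}_n(R)$.

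The main obstacle I anticipate is the second step: passing from $\mathfrak{m}$ being nil to $\mathcal{M}_n(\mathfrak{m})$ being nil. This is exactly where commutativity of $R$ is essential, because in general a matrix ring over a nil ideal need not be nil; the finitely-generated-nil-ideal-is-nilpotent trick is what makes the argument go through cleanly and is, in fact, the only nontrivial content of the corollary beyond Theorem~\ref{theorem} and Lemma~\ref{nil}.
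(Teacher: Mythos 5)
Your proof is correct and takes essentially the same route as the paper's: factor out the nil ideal $\mathcal{M}_n(J(R))$, identify the quotient with $\mathcal{M}_n(R/J(R))=\mathcal{M}_n(\mathbb{F}_2)$, and conclude with Theorem~\ref{theorem} and Lemma~\ref{nil}. The only difference is one of self-containedness: you prove directly the two facts the paper imports from Diesl's Proposition 3.24 (that $J(R)$ is nil and $R/J(R)\cong\mathbb{F}_2$ for a nil-clean local ring), and you spell out the finitely-generated-nil-ideal-is-nilpotent argument in a commutative ring showing that $\mathcal{M}_n(J(R))$ is nil, a point the paper merely asserts.
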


\begin{proof}
According to \cite[Proposition 3.24]{die-ja}, a ring $R$ with only trivial
idempotents is nil clean if and only if $R$ is a local ring with $J(R)$ nil
and $R/J(R)\cong \mathbb{F}_{2}$. If $R$ is any commutative local ring, then 
$J(\mathcal{M}_{n}(R))=\mathcal{M}_{n}(J(R))$ is nil, and the quotient of
the matrix ring by its Jacobson radical is isomorphic to $\mathcal{M}_{n}(%
\mathbb{F}_{2})$. This is nil-clean by our theorem and so $\mathcal{M}_{n}(R)
$ is nil-clean by Lemma \ref{nil}.
\end{proof}

\begin{corollary}
If $R$ is any Boolean ring then $\mathcal{M}_{n}(R)$ is nil-clean.
\end{corollary}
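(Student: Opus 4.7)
The plan is to reduce the general case to the case $R \cong \mathbb{F}_2$ already established in Theorem \ref{theorem}, exploiting the fact that a single matrix $A \in \mathcal{M}_n(R)$ has only finitely many entries and therefore lives in a \emph{finitely generated} Boolean subring of $R$. Note first that any Boolean ring is commutative and of characteristic $2$, and more importantly that every finitely generated Boolean ring is finite: if $R' = \mathbb{F}_2[a_{1},\dots,a_{m}]$ with all generators idempotent, then as an $\mathbb{F}_2$-vector space $R'$ is spanned by the $2^m$ products $\prod a_i^{\varepsilon_i}$, hence $|R'|\le 2^{2^m}$. Any finite Boolean ring is a direct product of finitely many copies of $\mathbb{F}_2$ (Stone's theorem, or by splitting off the idempotents one at a time).

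The first step is to fix $A = (a_{ij}) \in \mathcal{M}_n(R)$ and consider the subring $R' \subseteq R$ generated by the $n^2$ entries $a_{ij}$. By the observation above, $R' \cong \mathbb{F}_{2}^{k}$ for some positive integer $k$, so
\begin{equation*}
\mathcal{M}_{n}(R') \;\cong\; \mathcal{M}_{n}(\mathbb{F}_{2}^{k}) \;\cong\; \mathcal{M}_{n}(\mathbb{F}_{2})^{k}.
\end{equation*}
By Theorem \ref{theorem}, $\mathcal{M}_{n}(\mathbb{F}_{2})$ is nil-clean, and a finite direct product of nil-clean rings is clearly nil-clean (decompose componentwise; the maximum of the nilpotency indices bounds the nilpotency index of the tuple). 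Hence $\mathcal{M}_{n}(R')$ is nil-clean.

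The final step is to transport the decomposition back to $\mathcal{M}_{n}(R)$: writing $A = E + N$ in $\mathcal{M}_{n}(R')$ with $E$ idempotent and $N$ nilpotent, exactly the same matrices realize a nil-clean decomposition of $A$ when regarded in $\mathcal{M}_{n}(R)$, since idempotency and nilpotency are preserved under the inclusion $\mathcal{M}_{n}(R') \hookrightarrow \mathcal{M}_{n}(R)$. There is no serious obstacle here; the only point requiring a little care is the reduction to a finite subring, since one cannot argue pointwise through the Stone embedding $R \hookrightarrow \prod \mathbb{F}_{2}$ directly (nilpotency is not preserved under infinite products). Restricting to the finite subring $R'$ circumvents this issue completely.
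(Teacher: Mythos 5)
Your proof is correct and takes essentially the same approach as the paper: pass to the (finite, Boolean) subring generated by the entries of $A$, identify it with a finite direct power of $\mathbb{F}_{2}$, apply Theorem \ref{theorem} componentwise, and note that the resulting idempotent--nilpotent decomposition survives the inclusion $\mathcal{M}_{n}(R')\hookrightarrow \mathcal{M}_{n}(R)$. Your extra details (the explicit bound $2^{2^{m}}$ and the warning that nilpotency fails under infinite products) only spell out what the paper leaves implicit.
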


\begin{proof}
For a Boolean ring $R$, let $A\in \mathcal{M}_{n}(R)$. If $S$ is the subring
of $R$ generated by the entries of $A$, since $R$ is commutative and all
elements are idempotents, it is not hard to see that $S$ is (Boolean and)
finite. Hence $S$ is isomorphic to a finite direct products of copies of $%
\mathbb{F}_{2}$. Therefore $\mathcal{M}_{n}(S)$ is nil-clean since it is
isomorphic to a finite direct product of copies of $\mathcal{M}_{n}(\mathbb{F%
}_{2})$. Hence, the matrix $A$ is nil-clean in $\mathcal{M}_{n}(S)$ and so
nil-clean in $\mathcal{M}_{n}(R)$.
\end{proof}

\begin{corollary}
If $R$ is any commutative nil-clean ring then $\mathcal{M}_{n}(R)$ is
nil-clean.
\end{corollary}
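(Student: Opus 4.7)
My plan is to reduce the general commutative case to the Boolean case already handled, by passing to the quotient modulo the Jacobson radical and invoking Lemma \ref{nil}.

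First I would record the structural fact that for a commutative nil-clean ring $R$, the Jacobson radical $J(R)$ is a nil ideal and the quotient $R/J(R)$ is Boolean. Given $a \in R$ with a nil-clean decomposition $a = e + q$, the nilpotent $q$ lies in $J(R)$, so the image of $a$ in $R/J(R)$ equals the image of the idempotent $e$; this gives Booleanness. If moreover $a \in J(R)$, then $e = a - q \in J(R)$, and since $J(R)$ contains no nonzero idempotent one must have $e = 0$, so $a = q$ is nilpotent.

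Next I would verify that $\mathcal{M}_n(J(R))$ is a nil ideal of $\mathcal{M}_n(R)$ (coinciding in fact with $J(\mathcal{M}_n(R))$ by the standard commutative identification). For any fixed $A \in \mathcal{M}_n(J(R))$, the $n^2$ entries of $A$ are commuting nilpotents in $R$; the ideal of $R$ they generate is finitely generated by commuting nilpotents and is therefore nilpotent, say of index $m$. Then every entry of $A^m$ is a sum of $m$-fold products of entries of $A$, hence lies in the $m$-th power of that ideal, so $A^m = 0$.

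Finally, the isomorphism $\mathcal{M}_n(R)/\mathcal{M}_n(J(R)) \cong \mathcal{M}_n(R/J(R))$ exhibits the quotient as a matrix ring over a Boolean ring, which is nil-clean by the preceding corollary; Lemma \ref{nil} then lifts nil-cleanness back to $\mathcal{M}_n(R)$. The main obstacle is the middle step, since $J(R)$ itself need only be nil (not nilpotent), so one cannot cite a uniform bound to deduce that $\mathcal{M}_n(J(R))$ is nil; the saving grace is that any single matrix involves only finitely many entries, and a finitely generated commutative nil ideal is automatically nilpotent.
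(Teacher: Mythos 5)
Your proposal is correct and follows essentially the same route as the paper: pass to $R/J(R)$, which is Boolean with $J(R)$ nil, apply the Boolean corollary to $\mathcal{M}_n(R/J(R))$, and lift via Lemma \ref{nil}. The only difference is that you supply the details the paper cites or asserts without proof (the structural facts from Diesl's Corollary 3.20, and the verification that $\mathcal{M}_n(J(R))$ is nil via finitely generated commutative nil ideals being nilpotent), which is a sound and welcome elaboration of the same argument.
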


\begin{proof}
If $R$ is a commutative nil-clean ring, then $J(R)$ is nil and $R/J(R)$ is
Boolean (\cite[Corollary 3.20]{die-ja}). Hence $J(\mathcal{M}_{n}(R))$ is
nil and since $\mathcal{M}_{n}(R/J(R))$ is nil-clean, so is $\mathcal{M}%
_{n}(R)$ (just use $\mathcal{M}_{n}(R/J(R))\cong \mathcal{M}_{n}(R)/J(%
\mathcal{M}_{n}(R))$ and Lemma \ref{nil}).
\end{proof}

\section{Applications to Abelian groups}

We will apply here the main theorem from the preceding section to
characterize nil-clean Abelian groups of finite rank, i.e., those finite
rank Abelian groups whose endomorphism ring is nil-clean. It is worthwhile
noticing that a comprehensive investigation of clean $p$-torsion Abelian
groups was given in \cite{GV}.


The next statement is pivotal:

\begin{lemma}
\label{lemma-end} Let $G$ be a finite Abelian $2$-group of rank $n$. Then
there is an isomorphism $\mathrm{End}(G)/2\mathrm{End}(G)\cong \mathcal{M}%
_{n}(\mathbb{F}_{2})$.
\end{lemma}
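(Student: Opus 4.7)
\medskip
\noindent\textbf{Proof plan.} The plan is to exhibit a natural surjective ring homomorphism $\Phi\colon \mathrm{End}(G)\to \mathcal{M}_n(\mathbb{F}_2)$ whose kernel equals $2\mathrm{End}(G)$. Fix a decomposition $G=\bigoplus_{i=1}^n \mathbb{Z}/2^{k_i}\mathbb{Z}$ with canonical generators $e_1,\ldots,e_n$. Since $G/2G$ is an $\mathbb{F}_2$-vector space of dimension $n$ with basis the classes $\bar e_1,\ldots,\bar e_n$, one has $\mathrm{End}_{\mathbb{F}_2}(G/2G)\cong \mathcal{M}_n(\mathbb{F}_2)$. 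Every $\varphi\in\mathrm{End}(G)$ carries $2G$ into $2G$ and therefore descends to an $\mathbb{F}_2$-linear map $\bar\varphi$ on $G/2G$; the assignment $\Phi(\varphi)=\bar\varphi$ is manifestly a ring homomorphism, so the bulk of the work lies in verifying that $\Phi$ is surjective with kernel exactly $2\mathrm{End}(G)$.

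For the kernel, the inclusion $2\mathrm{End}(G)\subseteq\ker\Phi$ is immediate. For the reverse, given $\varphi$ with $\varphi(G)\subseteq 2G$, I would write $\varphi(e_j)=2y_j$ and define a tentative lift $\psi$ by $\psi(e_j)=y_j$; the nontrivial check is that $y_j$ can be selected (modulo $G[2]$) so as to satisfy $2^{k_j}y_j=0$, after which $\psi$ extends to an endomorphism of $G$ with $\varphi=2\psi$. For surjectivity, given a target $\bar\tau\in\mathrm{End}_{\mathbb{F}_2}(G/2G)$, I would lift each $\bar\tau(\bar e_j)$ to an element $z_j\in G$ whose order divides $2^{k_j}$ and extend by linearity to a $\tau\in\mathrm{End}(G)$ with $\Phi(\tau)=\bar\tau$.

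The main obstacle will be the bookkeeping for these two lifting steps when the exponents $k_i$ are not all equal: in both directions one must arrange for the chosen lift to land in the correct torsion subgroup, which is delicate whenever $G$ has summands of differing sizes. A careful case analysis based on the relative magnitudes of the $k_i$, together with the explicit description of $G[2]=\bigoplus_i 2^{k_i-1}\mathbb{Z}/2^{k_i}\mathbb{Z}$ and of $\mathrm{End}(G)$ as a generalized matrix ring with entries in $\mathrm{Hom}(\mathbb{Z}/2^{k_j}\mathbb{Z},\mathbb{Z}/2^{k_i}\mathbb{Z})\cong \mathbb{Z}/2^{\min(k_i,k_j)}\mathbb{Z}$, should identify the correct representative in each $\mathrm{Hom}$-group and close both halves of the argument simultaneously.
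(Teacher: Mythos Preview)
Your plan runs into a genuine obstruction, not just bookkeeping. Take $G=\mathbb{Z}/2\mathbb{Z}\oplus\mathbb{Z}/4\mathbb{Z}$, so $n=2$, $k_1=1$, $k_2=2$. The homomorphism $\Phi$ is \emph{not} surjective: the $\mathbb{F}_2$-linear map on $G/2G$ sending $\bar e_1\mapsto\bar e_2$, $\bar e_2\mapsto 0$ has no lift, because any $\tau\in\mathrm{End}(G)$ must carry the order-$2$ element $e_1$ into $G[2]=\mathbb{Z}/2\mathbb{Z}\oplus\{0,2\}$, whose image in $G/2G$ never hits $\bar e_2$. Dually, $\ker\Phi\supsetneq 2\,\mathrm{End}(G)$: the endomorphism $\varphi$ with $\varphi(e_1)=(0,2)$, $\varphi(e_2)=0$ has image in $2G$ but is not $2\psi$ for any $\psi$, since $2\psi(e_1)=(0,2)$ would force the second coordinate of $\psi(e_1)$ to be odd, hence $\psi(e_1)$ of order $4$. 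Your proposed fix of adjusting $y_j$ by an element of $G[2]$ cannot help, because $2^{k_1}G[2]=2\,G[2]=0$, so such adjustments leave $2^{k_1}y_j$ unchanged. A direct count gives $\lvert\ker\Phi\rvert=4$, $\lvert 2\,\mathrm{End}(G)\rvert=2$, and $\lvert\mathrm{Im}\,\Phi\rvert=8<16$.

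In fact no strategy can succeed, because the stated isomorphism fails for this $G$. The quotient $\ker\Phi/2\,\mathrm{End}(G)$ is a nonzero two-sided nilpotent ideal of $\mathrm{End}(G)/2\,\mathrm{End}(G)$ (indeed $(\ker\Phi)^2=0$ since $4G=0$), so that ring is not semisimple and hence cannot be isomorphic to the simple ring $\mathcal{M}_2(\mathbb{F}_2)$; equivalently, it has exactly $4$ units while $\mathrm{GL}_2(\mathbb{F}_2)$ has $6$. The paper's argument passes to the generalized matrix ring $(\mathrm{Hom}(C_i,C_j))$ and reduces each entry modulo $2$; this produces the correct additive group $\mathbb{F}_2^{\,n^2}$ but tacitly assumes the induced multiplication matches matrix multiplication over $\mathbb{F}_2$, which the example above shows is false whenever the cyclic summands have unequal orders. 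What \emph{is} true, and what actually suffices for the subsequent Proposition, is that $J(\mathrm{End}(G))$ is nilpotent and $\mathrm{End}(G)/J(\mathrm{End}(G))$ is a finite direct product of matrix rings over $\mathbb{F}_2$.
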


\begin{proof}
There is a direct decomposition $G=C_1\oplus\dots\oplus C_n$, where all $C_i$
are cyclic subgroups of $G$. Thus $\mathrm{End}(G)$ is isomorphic to the
matrix ring $(\mathrm{Hom}(C_i,C_j))_{1\leq i,j\leq n}$ (see, e.g., \cite{F}%
). Moreover, all groups $\mathrm{Hom}(C_i,C_j)$ are cyclic 2-groups.
Utilizing this information, it is easy to see $\mathrm{End}(G)/2\mathrm{End}%
(G)\cong\mathcal{M}_n(\mathbb{F}_2)$, as stated.
\end{proof}

\begin{proposition}
\label{fini}A finite rank Abelian group $G$ is nil-clean if and only if $G$
is a finite $2$-group.
\end{proposition}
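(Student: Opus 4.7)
The plan is to prove the two implications separately, using the preceding machinery at each step.

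For the ``if'' direction, suppose $G$ is a finite $2$-group of rank $n$. Lemma \ref{lemma-end} gives $\mathrm{End}(G)/2\mathrm{End}(G)\cong\mathcal{M}_n(\mathbb{F}_2)$, and Theorem \ref{theorem} tells us the right-hand side is nil-clean. Since $G$ has finite exponent $2^k$, we have $(2\mathrm{End}(G))^k\subseteq 2^k\mathrm{End}(G)=0$, so $2\mathrm{End}(G)$ is a nil ideal. Lemma \ref{nil} then transfers nil-cleanness back to $\mathrm{End}(G)$, as desired.

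For the ``only if'' direction, suppose $\mathrm{End}(G)$ is nil-clean and $G$ has finite rank. First I would apply Lemma \ref{char2} to the ring $\mathrm{End}(G)$: the element $2\cdot 1_{\mathrm{End}(G)}$ is nilpotent, so $2^k\cdot\mathrm{id}_G=0$ for some $k\ge 1$. This forces $G$ to be a $2$-bounded group, hence a torsion abelian group of exponent dividing $2^k$. By the classical structure theorem for bounded abelian groups, $G$ decomposes as a direct sum of cyclic $2$-groups of order at most $2^k$.

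Finally, the finite rank hypothesis allows only finitely many summands in that decomposition, so $G$ is a finite direct sum of cyclic $2$-groups of bounded order, hence a finite $2$-group. The only delicate point here is pinning down what is meant by ``finite rank'' for a (potentially) torsion group, but once $G$ is known to be a bounded $2$-group, finite rank in any of the standard senses (torsion-free rank plus $p$-ranks, or simply $\dim_{\mathbb{F}_2}G[2]$) collapses to finiteness of the number of cyclic summands, and the conclusion follows. No genuine obstacle is expected: the proof is essentially an application of Lemma \ref{char2}, Lemma \ref{nil}, Lemma \ref{lemma-end}, and Theorem \ref{theorem}, glued together by the structure theorem for bounded abelian groups.
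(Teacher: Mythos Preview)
Your proof is correct and follows essentially the same route as the paper: Lemma~\ref{char2} forces $G$ to be a bounded $2$-group (hence finite by the rank hypothesis), and conversely the nilpotence of $2\mathrm{End}(G)$ together with Lemma~\ref{lemma-end}, Theorem~\ref{theorem}, and Lemma~\ref{nil} yields nil-cleanness of $\mathrm{End}(G)$. The only difference is that you spell out the intermediate steps (the structure theorem for bounded groups, the explicit bound $(2\mathrm{End}(G))^k=0$) a bit more than the paper does.
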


\begin{proof}
If $G$ is nil-clean, then Lemma~\ref{char2} applies to deduce that the
endomorphism $2\in \mathrm{End}(G)$ is nilpotent, hence $G$ is a (bounded)
2-group. Therefore, $G$ is a finite $2$-group.

Conversely, if $G$ is a finite $2$-group, then $2\mathrm{End}(G)$ is a
nilpotent ideal in $\mathrm{End}(G)$. The final conclusion now follows by a
combination of Lemmas~\ref{nil} and \ref{lemma-end} with Theorem~\ref%
{theorem}.
\end{proof}

\medskip

Using similar arguments, one can deduce:

\begin{proposition}
An Abelian group $G$ is strongly nil-clean if and only if $G$ is a cyclic $2$%
-group.
\end{proposition}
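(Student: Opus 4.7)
The ``if'' direction is straightforward: if $G$ is a cyclic $2$-group, then $\mathrm{End}(G) \cong \mathbf{Z}(2^k)$ is commutative, and every element there is either odd (hence of the form $1+n$ for some $n$ in the nilpotent ideal $2\mathbf{Z}(2^k)$) or even (hence itself nilpotent, equal to $0+n$). Commutativity automatically promotes this nil-clean decomposition to a strongly nil-clean one.

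For the converse, assume $G$ is strongly nil-clean. Since this implies nil-clean, Lemma~\ref{char2} forces $G$ to be a bounded $2$-group, so by Pr\"ufer's theorem $G$ splits as a direct sum of cyclic $2$-groups. Suppose, toward a contradiction, that $G$ is not cyclic: one may then select two nonzero cyclic summands and write $G = H \oplus K$ with $H \cong C_{2^a}\oplus C_{2^b}$ for some $a,b \geq 1$. Applying Lemma~\ref{lemma-end} to the finite rank-$2$ group $H$ yields $\mathrm{End}(H)/2\mathrm{End}(H) \cong \mathcal{M}_2(\mathbb{F}_2)$, and the kernel $2\mathrm{End}(H)$ is nilpotent because $H$ is bounded.

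The key step is to produce a non-unipotent unit of $\mathrm{End}(G)$, since by Lemma~\ref{unipotent} this is incompatible with strong nil-cleanness. In $\mathcal{M}_2(\mathbb{F}_2)$ take the companion matrix $A = \bigl(\begin{smallmatrix} 0 & 1 \\ 1 & 1 \end{smallmatrix}\bigr)$ of $x^2+x+1$: a direct computation gives $(A-I)^3 = I$, so $A-I$ is not nilpotent and $A$ is a non-unipotent unit. Lift $A$ to some $\psi \in \mathrm{End}(H)$: nilpotence of $2\mathrm{End}(H)$ forces $\psi$ to be a unit, and $\psi - \mathrm{id}_H$ remains non-nilpotent (otherwise its image $A-I$ would be nilpotent). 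Extending by the identity, $\phi = \psi \oplus \mathrm{id}_K \in \mathrm{End}(G)$ is a unit with $\phi - \mathrm{id}_G = (\psi - \mathrm{id}_H) \oplus 0$ still non-nilpotent, yielding the required contradiction. The main subtlety is that $G$ may be infinite so Proposition~\ref{fini} does not apply directly; this is handled uniformly by the device of isolating a finite rank-$2$ summand $H$ and extending endomorphisms by the identity on the complement.
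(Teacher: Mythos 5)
Your proof is correct, but your converse direction follows a genuinely different route from the paper's. The paper passes to the full quotient $\mathrm{End}(G)/2\mathrm{End}(G)$, identifies it (as in Lemma~\ref{lemma-end}, but for arbitrary rank) with a ring of row-finite matrices over $\mathbb{F}_{2}$, and then argues that in a strongly nil-clean situation every injective endomorphism of an $\mathbb{F}_{2}$-vector space would have to be unipotent, which fails once the dimension is at least $2$. You instead never leave the finite-rank world: you split off a rank-two bounded summand $H$, apply Lemma~\ref{lemma-end} only for $n=2$, lift the order-three unit $A$ of $\mathcal{M}_{2}(\mathbb{F}_{2})$ through the nilpotent ideal $2\mathrm{End}(H)$, and extend by the identity on a complement to produce a non-unipotent unit of $\mathrm{End}(G)$, contradicting Lemma~\ref{unipotent}. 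Both arguments rest on the same two pillars --- Lemma~\ref{char2} to force $G$ to be a bounded $2$-group and Lemma~\ref{unipotent} to exclude non-unipotent units --- but your version avoids the infinite row-finite matrix ring and the paper's rather terse ``injective implies unipotent'' step, at the modest cost of invoking Pr\"ufer's theorem to extract two cyclic summands; you also write out the easy ``if'' direction ($\mathrm{End}(G)\cong \mathbf{Z}(2^{k})$ commutative with nil ideal $2\mathbf{Z}(2^{k})$), which the paper leaves implicit. One small point to add: you should note explicitly that $A$ itself is a unit (e.g.\ $A^{3}=I$, since $A^{2}+A+I=0$ in characteristic $2$), because what you lift to the unit $\psi$ is $A$, not $A-I$; as written you only verify the invertibility of $A-I$.
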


\begin{proof}
Suppose $G$ is not cyclic and the endomorphism ring of $G$ is strongly
nil-clean. As before, the endomorphism $2$ is nilpotent, hence $G$ is a
bounded 2-group. As in Lemma~\ref{lemma-end}, we infer that $\mathrm{End}%
(G)/2\mathrm{End}(G)$ is isomorphic to the ring of row-finite matrices over $%
\mathbb{F}_{2}$ (see \cite{F}). Therefore, in order to obtain a
contradiction, it is enough to prove that if $V$ is an $\mathbb{F}_{2}$%
-vector space of dimension $\geq 2$ then its endomorphism ring is not
strongly nil-clean.

To that aim, let $V$ be such a vector space and let $\varphi :V\rightarrow V$
be an injective endomorphism of $V$. If $\varphi =e+n$ with idempotent $e$
and nilpotent $n$ such that $en=ne$ is a strongly nil-clean decomposition,
using Lemma \ref{unipotent}, all injective endomorphisms of $V$ can be
written as $1+n$ with $n$ nilpotent, and this holds only if $V$ is
1-dimensional.
\end{proof}

\medskip

In closing, although we suspect this to be true, we were not able to prove
that \emph{the endomorphism ring of a vector space of countable dimension
over }$\mathbb{F}_{2}$\emph{~is not nil clean}. Would it be true, we could
remove in Proposition \ref{fini} the hypothesis "finite rank", thus
obtaining the complete algebraic structure of Abelian groups with nil-clean
endomorphism ring.

More, we were not able to prove (3)$\Longrightarrow $(1) in Theorem \ref%
{theorem} for division rings (i.e., not necessarily commutative). Would this
be done, our main result (Theorem \ref{theorem}) could be extended to
division rings. This, in turn, would have an important consequence, namely, 
\emph{an Artinian ring }$R$\emph{\ is nil-clean if and only if the top ring }%
$R/J(R)$\emph{\ is a direct product of matrix rings over }$\mathbb{F}_{2}$%
\emph{.}

\begin{acknowledgement}
Thanks are due to the referee, for his/her valuable suggestions (especially
the Corollaries to Theorem \ref{theorem}) which improved our study.
\end{acknowledgement}

\medskip

\textbf{Addresses of S. Breaz, G. C\u{a}lug\u{a}reanu and T. Micu}:

Department of Mathematics and Computer Science

Babe\c{s}-Bolyai University

1 Kog\u{a}lniceanu street,

400084 Cluj-Napoca, Romania

Emails: bodo@math.ubbcluj.ro,

calu@math.ubbcluj.ro,

tudormicu1@gmail.com

\medskip

\textbf{Address of P. Danchev}:

Department of Mathematics,

University of Plovdiv,

24 Tzar Assen Street,

4000 Plovdiv, Bulgaria

Email: pvdanchev@yahoo.com

\end{document}